\newtheorem{thm}{Theorem}[section]
\newtheorem{lem}[thm]{Lemma}
\newtheorem{prop}[thm]{Proposition}
\theoremstyle{remark}
\newtheorem{definition}{Definition}
\newtheorem{example}{Example}[section]
\theoremstyle{definition}
\newtheorem*{LY Theorem}{Lee-Yang Theorem}
\numberwithin{equation}{section}
\numberwithin{figure}{section}
\font\nt=cmr7
\def\note#1
\newcommand{\eps}{{\varepsilon}}
\newcommand{\R}{{\Bbb R}}
\def\B0{{\mathbf{0}}}
\def\Empty{}
\newcommand\oplabel[1]{
  \def\OpArg{#1} \ifx \OpArg\Empty {} \else
  	\label{#1}
  \fi}
\newcommand{\comm}[1]{}
\newcommand{\comment}[1]{}
\begin{document}

\bigskip\bigskip

\title[Unknotting number for Lorenz knots]{Unknotting number for Lorenz knots}

\author {Lilya Lyubich}
\date{\today}

 \begin{abstract} 
The unknotting number of a positive braid with $n$ strands and $k$ 
intersections is
 known to be equal to
$(k-n+1)/2$. We consider  Lorenz knots (which are  
positive 
braids) and, using a different method, find their unknotting numbers 
in terms of their positions on the Lorenz attractor.
\end{abstract}

\setcounter{tocdepth}{1} 

\maketitle
\tableofcontents

\section{Introduction}

In this paper we calculate unknotting numbers for some knots arising in
dynamical systems.

Given a flow $\phi_t$ on a 3-manifold having a hyperbolic structure on its
chain recurrent set, the link of periodic orbits of $\phi_t$   is in bijective
correspondence with the link of periodic orbits (up to at most two exceptional
orbits) on a particular branched 2-dimensional manifold called template or the
knot-holder (\cite{BW2},Theorem 2.1).

By a template (a knot-holder) $H$ is meant a branched 2-manifold $H \subset
M^3 $, $ H \neq \emptyset $,  together with a semi-flow $\overline{\phi}_t$ 
on $H$
such that $H$ has an atlas consisting of 2 types of charts, a joining chart
and a splitting chart, as on Figure 1.  
\begin{figure*}[h]

\vspace{2mm}

 \includegraphics[height=1in,viewport=75 640 360 750,clip]{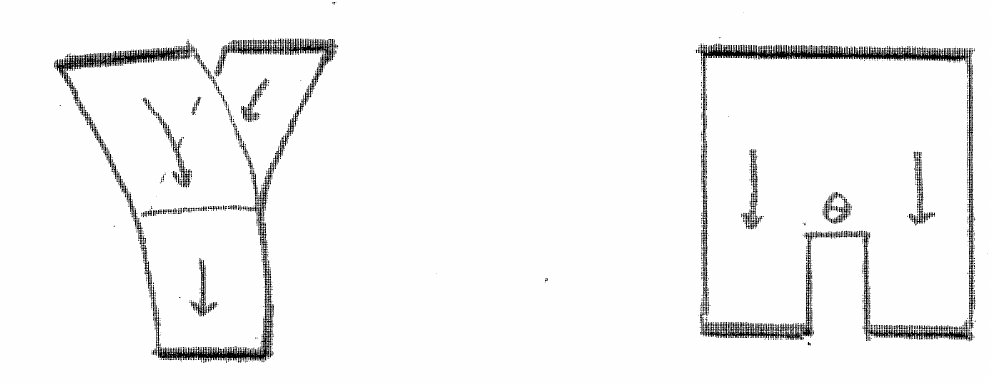}
\vspace{2mm}

Figure 1.
\label{lfig1}
\end{figure*}
Each component of the intersection of
two charts is the bottom of one and a top of the other. The flow
$\overline{\phi}_t$ is shown on the Figure 1. From the set $\theta$ the flow
goes outwards.

We chose the simplest template, the Lorenz attractor, see Figure 2,
corresponding to the flow in $S^3$ generated by the Lorenz equations 
(\cite{BW1}).
\begin{figure*} 
\includegraphics[height=5cm,viewport=0 0 515 280,clip]{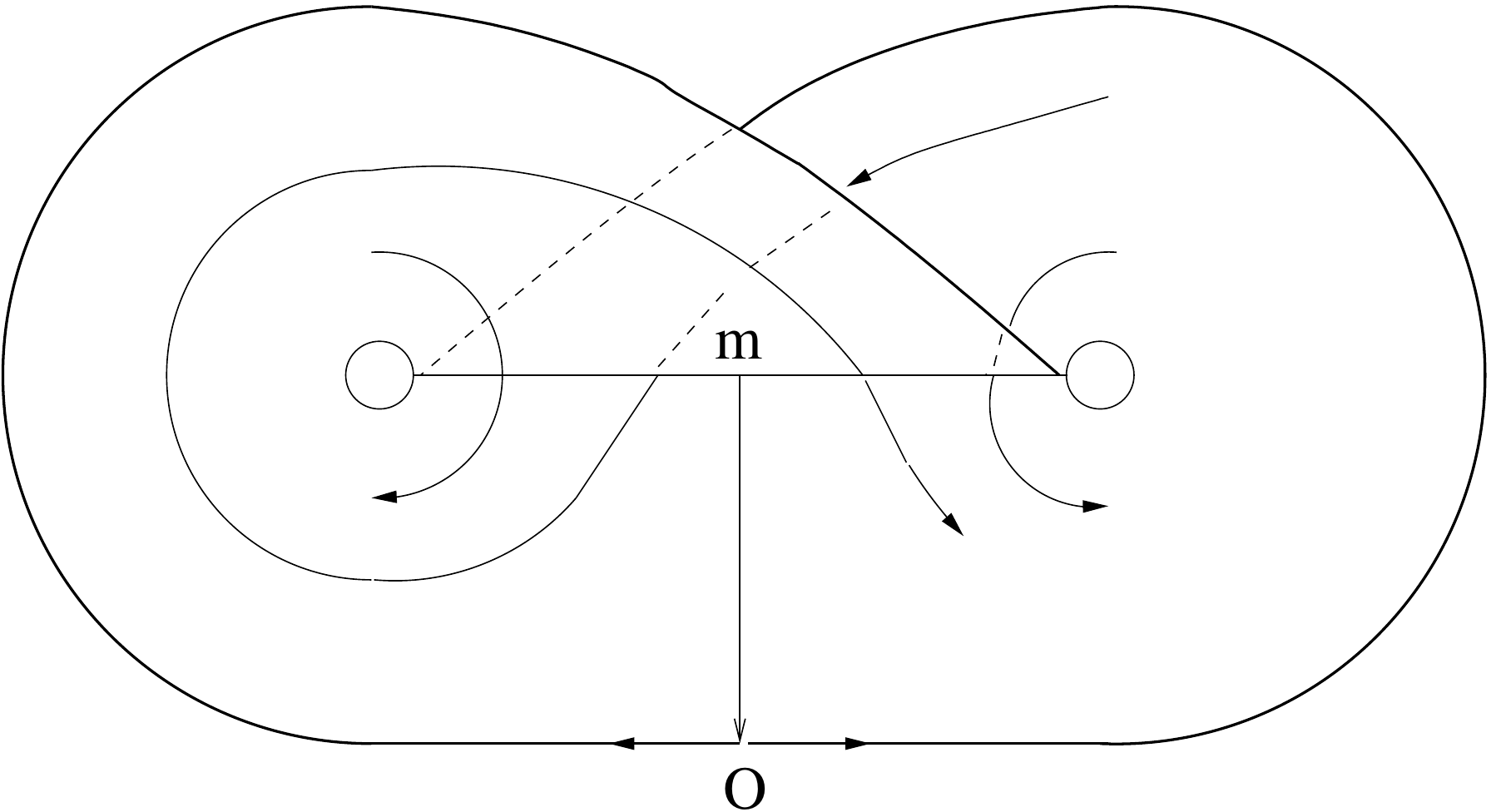}
\vspace{2mm}

Figure 2. 
\label{cfig2}
\end{figure*}
This template has an atlas consisting of two charts: a joining chart and a
splitting chart. Knots living on this template are called Lorenz knots.
All Lorenz knots are positive braids (\cite{BW1}), and it was proven by 
Charles Livingston 
in \cite{L}, 
Corollary 7, that $\tau$-invariant of a positive braid is equal to
$ (k - n + 1) / 2 $, where $k$ is the number of  intersections, and $n$
is the number of  strands of the braid. This gives a lower bound for the 
unknotting number, and since it is also an upper bound (\cite{BoW}, \S 4),
the unknotting number of a positive braid is equal to
$(k-n+1)/2$.
 
In this paper we calculate, using a different method, 
the unknotting numbers of Lorenz knots in terms of their positions on Lorenz
attractor.
Here is an outline of the paper. In section 2 we use symbolic dynamics
and introduce an additional geometric description of Lorenz knots to formulate
the main result of this paper, Theorem 2.3. In section 3 we give necessary
background in grid homology and $\tau$-invariant. Sections 4-7 contain the
proof of Theorem 2.3.                                                          
          
Acknowledgment. I am very grateful to Peter Ozsv\'ath for introducing me to 
 grid
homology and $\tau$-invariant, to Misha Lyubich for suggesting me to calculate
$\tau$-invariant for knots arising in dynamical systems, and to Scott
Sutherland for his help with figures.

\section{Lorenz knots}

Let $ H $ be the Lorenz attractor, see Figure 2.
Denote by $I$ the branch set of $H$. The positive orbit through $ m $, $\bar{\phi } _t(m),\: t>0 $ approaches $O$ as $t \rightarrow \infty $.
For $ x \in I - \{ m \} $ there is a first return (Poincar\'e) image, $ f(x) \in I $. The orbit from $ x $ to $ f(x) $ goes
around the left
hole and in front for $ x<m $, around the right hole and in back for $ x>m $. Since the orbits don't intersect, $ f(x) $ is
monotonically increasing on both sub-intervals : $I_1=\{x \in I,\; x<m \} $ and $ I_2=\{x \in I, \; x>m \} $.

 Consider symbolic dynamics on $I$  generated by the first return map $f$.
To each point $z$ we assign a finite or infinite sequence $k_0(z), k_1(z), \ldots $ where
 $$
k_0(z)=\begin{cases} x  \mbox{ if }  z \mbox{ is to the left of } m \\
          0 \mbox{ if } z=m \\
          y \mbox{ if } z \mbox{ is to the right of } m; \end{cases}
$$
and $k_i(z) $ is defined  iff $f^i(z) $ is defined by  $k_i(z)=k_0(f^i(z)).$
The sequences $k$ are lexicographically ordered by setting $ x<0<y$.
\begin{prop}
(\cite{BW1}, Proposition 2.4.1)

\noindent
The map $ z \rightarrow k(z) $  is a 1-to-1 order preserving correspondence between the points of the branch set and
 the set of all sequences $ k_0,k_1, \ldots $ such that
$$
\begin{array}{l}
(a)\;  \mbox{ each }\; k_i=x,y \mbox{ or } 0 \\
(b) \; \mbox{ the sequence terminates with } k_i \mbox{ iff } k_i=0.
\end{array}
$$
\end{prop}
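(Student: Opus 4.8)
The plan is to establish the three standard ingredients behind such a symbolic coding: that every itinerary satisfies the admissibility conditions (a) and (b), that the map $z \mapsto k(z)$ is injective and order preserving, and that it is onto the admissible sequences. The structural fact driving everything is already recorded in the text, namely that $f$ is \emph{increasing} on each of $I_1$ and $I_2$. The one additional analytic input I would use is that the first return map is uniformly expanding, $|f(z)-f(z')| \ge \lambda\,|z-z'|$ whenever $z,z'$ lie on a common subinterval $I_j$, for some fixed $\lambda > 1$; this is part of the hyperbolic structure of the Lorenz return map, and it is what forces distinct points to be eventually separated by $m$ and makes the reconstruction of a point from its itinerary well posed.

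First I would check admissibility. Condition (a) is immediate from the definition of $k_0$. For (b), observe that $f$ is undefined precisely at $m$, since the forward orbit through $m$ leaves $I$ and limits on $O$; thus $f^{i+1}(z)$ exists iff $f^{i}(z)\neq m$, i.e. iff $k_i(z)\neq 0$, so the sequence runs on forever when no symbol is $0$ and stops exactly at the first $0$. Next, order preservation. Suppose $z<z'$ and let $n$ be the first index at which $k_n(z)\neq k_n(z')$. For $i<n$ the two orbits share a common symbol $k_i\in\{x,y\}$, so $f^i(z)$ and $f^i(z')$ lie on a single subinterval on which $f$ is increasing; an easy induction gives $f^i(z)<f^i(z')$ for all $i\le n$. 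At level $n$ the ordered points $f^n(z)<f^n(z')$ sit on different sides of (or at) $m$, forcing $k_n(z)<k_n(z')$ in the order $x<0<y$, whence $k(z)<k(z')$ lexicographically. If one sequence terminates first, the same induction applies up to the terminating index, where the orbit equals $m$ and so carries the smaller symbol $0$. The only remaining possibility is that the orbits never differ, staying on a common side forever; then expansion gives $f^i(z')-f^i(z)\ge \lambda^i(z'-z)\to\infty$, impossible in the bounded interval $I$. Hence $z<z'$ always produces a genuine difference, so the coding map is injective, order preserving, and order reflecting.

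For surjectivity I would invert the branches. Let $g_x,g_y\colon I\to I$ be the inverse branches of $f$ mapping into $I_1$ and $I_2$ respectively; each is well defined and contracts by $\lambda^{-1}$, using that $f$ carries each $I_j$ onto $I$. Given a finite admissible sequence $k_0,\ldots,k_{n-1},0$, the point $z=g_{k_0}\circ\cdots\circ g_{k_{n-1}}(m)$ has exactly this itinerary. Given an infinite admissible sequence with all symbols in $\{x,y\}$, set $J_N=g_{k_0}\circ\cdots\circ g_{k_N}(I)$; these are nested nonempty subintervals with $\diam J_N\le \lambda^{-(N+1)}\diam I\to 0$, so $\bigcap_N J_N$ is a single point $z$, and by construction $f^i(z)$ lies on the side prescribed by $k_i$ for every $i$, i.e. $k(z)$ is the given sequence. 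This exhausts all admissible sequences and completes the bijection.

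The main obstacle is not combinatorial but lies in the single input I flagged: the uniform expansion of the return map. Mere monotonicity delivers order preservation wherever two itineraries already differ, but it is expansion that rules out the degenerate never-separated case, giving injectivity, and that shrinks the cylinders $J_N$ to points, giving surjectivity. A secondary point demanding care is the bookkeeping at the two endpoints of $I$ and along the forward orbit of $m$, where the inverse branches reach the boundary; these are the source of the ``at most two exceptional orbits'' caveat mentioned earlier, and one verifies directly that they cause no collision in the stated correspondence.
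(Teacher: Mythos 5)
The paper does not prove this proposition; it is quoted verbatim from Birman--Williams \cite{BW1} (Proposition 2.4.1), so there is no in-paper argument to compare against. Your proof is the standard one and is essentially correct: monotonicity of the branches gives order preservation once itineraries differ, uniform expansion rules out the never-separated case (injectivity) and shrinks the cylinders $J_N$ to points (surjectivity), and you are right that expansion -- equivalently, that each branch of the return map carries its subinterval onto all of $I$ with the hyperbolicity built into the template -- is the one input beyond what the paper's text explicitly records; it is part of the defining data of the Lorenz template in \cite{BW1}, and without it the statement would be false (a wandering or attracting interval inside one branch would produce distinct points with identical itineraries). The only loose end, which you correctly flag, is the boundary bookkeeping: for an infinite admissible sequence one should check that the point $z=\bigcap_N J_N$ never lands exactly on $m$ under iteration (otherwise its itinerary would terminate), which follows since $z\in J_N$ for $N>i$ forces $f^{i+1}(z)$ to be defined.
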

\begin{thm}(\cite{BW1} Corollary 2.4)

\noindent
The periodic orbits of $ \bar{\phi } _t $ correspond 1-to-1 with the cyclic permutation classes of finite aperiodic  words
in $ x $ and $ y $.
\end{thm}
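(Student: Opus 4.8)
The plan is to transport the dynamics of $\bar\phi_t$ to the shift on admissible sequences furnished by the preceding Proposition. First I would observe that a closed orbit of the semi-flow meets the branch set $I$ in a finite set of points, and that these points form a single periodic orbit of the first-return map $f$; conversely every periodic point of $f$ lies on a closed orbit. Thus periodic orbits of $\bar\phi_t$ correspond bijectively to periodic orbits $\{z, f(z), \ldots, f^{n-1}(z)\}$ of $f$ with $f^n(z)=z$. Since such a $z$ never equals $m$ (otherwise its forward orbit would run into $O$ and the coding sequence would terminate), the Proposition assigns to $z$ an \emph{infinite} sequence $k(z) = k_0 k_1 \cdots$ with every $k_i \in \{x,y\}$.

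Next I would record the defining relation $k_i(z) = k_0(f^i(z))$, which immediately gives $k_i(f(z)) = k_{i+1}(z)$, i.e. $k \circ f = \sigma \circ k$, where $\sigma$ is the left shift. Hence $f^n(z)=z$ forces $\sigma^n(k(z)) = k(z)$, so $k(z)$ is a periodic sequence $\overline{w}$ with $w = k_0 k_1 \cdots k_{n-1}$ a word of length $n$ in $x,y$. Conversely, any finite word $w$ in $x,y$ yields the periodic sequence $\overline{w}$, which is admissible (it never terminates and uses only $x,y$), hence is realized by a unique point $z_w \in I$ by the Proposition; applying the order-preserving bijection backwards shows $z_w$ is periodic under $f$. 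The return map therefore acts on codes exactly as the cyclic shift acts on $w$, so the $f$-orbit of $z_w$ is coded by the full set of cyclic shifts of $w$, that is, by a cyclic permutation class of words.

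It remains to pin down the precise $1$-to-$1$ statement, and here the aperiodicity hypothesis enters. If $w$ is a proper power $v^j$ with $v$ primitive of length $d=n/j$, then $\overline{w} = \overline{v}$ and $z_w$ in fact has minimal period $d$, so $w$ contributes no orbit beyond the one already named by $v$; moreover the cyclic shifts of a non-primitive word collapse, miscounting the orbit length. Passing to \emph{aperiodic} (primitive) words removes this redundancy: each periodic orbit has a well-defined minimal period $d$ and is coded by $\overline{v}$ for a unique primitive word $v$ of length $d$, determined only up to cyclic permutation. This yields the desired bijection between periodic orbits of $\bar\phi_t$ and cyclic permutation classes of finite aperiodic words in $x,y$.

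The main obstacle I anticipate is not the shift conjugacy, which is formal, but the careful bookkeeping of periods: one must verify that the minimal period of $z_w$ equals the length of the primitive root of $w$, that distinct cyclic classes of primitive words give disjoint orbits, and that primitivity is exactly what makes the $n$ cyclic shifts pairwise distinct, so that the orbit genuinely consists of $n$ points. These combinatorial facts about words, together with the surjectivity clause of the Proposition (every admissible, and in particular every periodic, sequence occurs), are what upgrade the shift conjugacy into a clean bijection.
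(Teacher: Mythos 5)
Your argument is correct, but note that the paper itself offers no proof of this statement: it is quoted verbatim from Birman--Williams (\cite{BW1}, Corollary 2.4), so there is no internal proof to compare against. What you have written is the standard derivation from the preceding Proposition --- periodic flow orbits correspond to periodic $f$-orbits on the branch line, the coding intertwines $f$ with the shift, and primitivity of the word is exactly what matches the minimal period of the orbit --- and your bookkeeping of periods and disjointness of orbits for distinct cyclic classes is sound.
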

\begin{example}  On Figure 3 we have a  diagram of the knot $ K=x^3y^3xy^2.$
\end{example}

\begin{figure*} 
\includegraphics[height=5cm,viewport=25 490 590 755,clip]{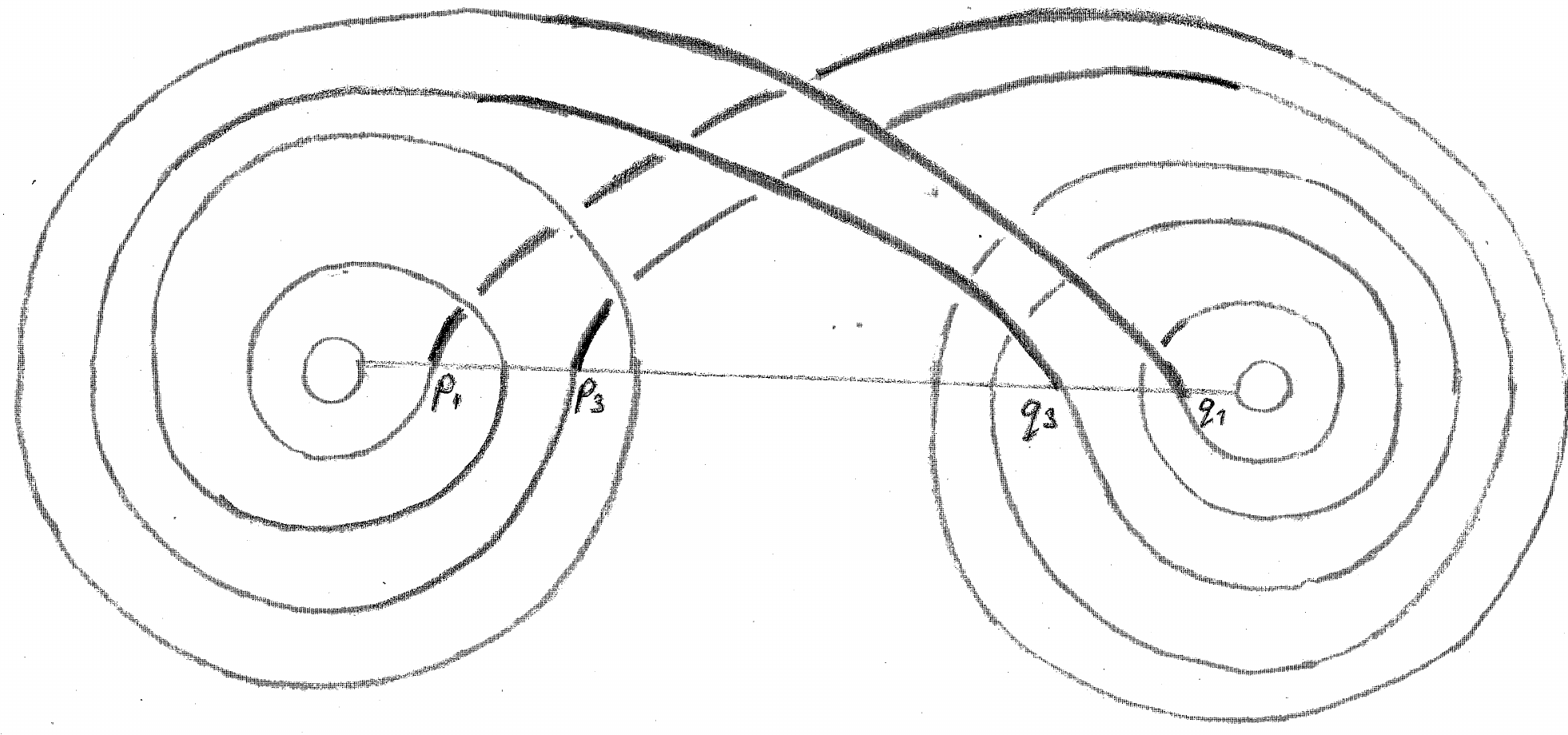}
\vspace{2mm}

Figure 3. 
\label{lfig3}
\end{figure*}

Given a Lorenz knot K corresponding to the word $ x^{\alpha _1} y^{\beta _1} x^{\alpha _2} \ldots x^{\alpha _t} y^{\beta _t} $, let $a=\sum _1^t \alpha _i$,
$ b=\sum _1^t \beta _i $, and $ t $ be the trip number: the number of strands going from the  left to the right ( equal to the number of
strands going from the right to the left). To formulate our result we need to introduce some notations. We want to describe the
positions of the strands going from the right to the left, $ \mu _i $, and from the
left  to the right , $ \nu _i $.
To the knot $ K $ there corresponds a finite set of its intersection points with the branch set $ I $. Denote them by
$ p_1 < p_2 \ldots < p_a <  q_b < \ldots < q_1; \;  p_i \in I_1, \; q_i \in I_2. $ These points are permuted by the first return map .
We have: either $ p_i= f( p_j) $ or $ p_i =f(q_l) $ . We are interested in the case when $ p_i =f(q_l) $
for some $ l $. Let $  \{p_1, \ldots p_a \}  \cap f(\{q_b, \ldots , q_1\})=\{p_{\mu _1}< p_{\mu _2}< \ldots < p_{\mu _t}\}$.
 Similarly, let $  \{ q_b, \ldots , q_1 \} \cap f( \{ p_1, \ldots ,p_a\})= \{q_{\nu _t} < \ldots, < q_{\nu _1} \} $.
 This defines $\mu _i $ and $\nu _i $ uniquely.
Note that $ \mu _1 =1 $ , since  if $ p_1 = f(p_j)$ for some $ j $, by monotonicity of $ f $ , $ f(p_1) $ must be to the left
of $ p_1 $ which contradicts our numeration.  So $ p_1 = f(q_l) $ for some $ l $. For the same reason $ \nu _1 =1 $.
\begin{example} For the knot $K$ on Figure 3 we have $\mu _1=1, \:\mu _2=3, \:\nu _1=1, \:
\nu _2=3$.
\end{example}

\noindent
We prove the following theorem:
\begin{thm} The unknotting number of a Lorenz knot $ K $ is given by the formula
\begin{equation}\label{e1}
 u(K)= \frac{1}{2}((a+b)(t-1)-\sum _1^t \mu _i -\sum _1^t \nu _i +(t+1)).
\end{equation}
\end{thm}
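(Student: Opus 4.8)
The plan is to reduce the statement to a crossing count and then evaluate that count in terms of the positional data $a,b,t,\mu_i,\nu_i$. Since a Lorenz knot $K$ is a positive braid, the result quoted in the introduction gives $u(K)=(k-n+1)/2$, where $n$ is the number of strands and $k$ the number of crossings; the grid-homology material of Section 3 is meant to furnish the independent (``different'') route to this equality, via $|\tau(K)|\le g_4(K)\le u(K)$ together with the sharpness of the upper bound for positive braids. In either case the braid has exactly $n=a+b$ strands, one for each intersection point $p_1<\dots<p_a<q_b<\dots<q_1$ of $K$ with the branch set, so it remains only to show
\begin{equation}\label{eplan}
k=t(a+b+1)-\sum_{i=1}^{t}\mu_i-\sum_{i=1}^{t}\nu_i,
\end{equation}
after which \eqref{e1} follows by substituting $n=a+b$ into $(k-n+1)/2$ and simplifying.

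Next I would describe $k$ as an inversion count of the first-return permutation $\pi$, where $\pi$ sends the position of $z$ to the position of $f(z)$. Because $f$ is monotonically increasing on each of $I_1$ and $I_2$, any two strands that start on the same side preserve their relative order and hence do not cross; thus the Lorenz braid is one in which each pair of strands crosses at most once, and every crossing occurs between a strand starting on the left and one starting on the right. A strand starting at $p_i$ and one starting at $q_l$ start in the order $p_i$-before-$q_l$, so they cross exactly when their endpoints are reversed, giving
\[
k=\#\bigl\{(i,l): \operatorname{pos}(f(p_i))>\operatorname{pos}(f(q_l))\bigr\}.
\]
Equivalently, I would colour each branch point by whether it is the $f$-image of some $p$ (colour $L$) or of some $q$ (colour $R$); then $k$ counts the pairs consisting of an earlier $R$-point and a later $L$-point.

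The third step is the evaluation of this count. By the definitions of $\mu_i$ and $\nu_i$, the $p$-points coloured $R$ are exactly $p_{\mu_1},\dots,p_{\mu_t}$ and the $q$-points coloured $L$ are exactly $q_{\nu_1},\dots,q_{\nu_t}$, while all remaining $p$-points are $L$ and all remaining $q$-points are $R$. Splitting the pair count according to whether the two points lie in the $p$-block $\{1,\dots,a\}$ or the $q$-block, the cross-block pairs (an $R$-coloured $p$ before an $L$-coloured $q$) contribute $t\cdot t=t^2$, while the within-block pairs contribute $\sum_{s=1}^{t}\bigl((a-\mu_s)-(t-s)\bigr)$ from the $p$-block and $\sum_{s=1}^{t}\bigl((b-\nu_s)-(t-s)\bigr)$ from the $q$-block, where one must track that the $q_j$ are indexed in the reverse of their positional order. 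Using $\sum_{s=1}^{t}(t-s)=t(t-1)/2$, these three contributions sum to $t(a+b+1)-\sum_i\mu_i-\sum_i\nu_i$, which is \eqref{eplan}.

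The main obstacle is not the arithmetic but pinning down the dictionary between the flow data and the braid combinatorics: one must prove carefully that the Lorenz template realises $K$ as a braid in which each pair of strands crosses at most once, so that $k$ genuinely equals the inversion count above, and one must handle the reversed indexing of the $q_j$ without sign or off-by-one errors. If instead one insists on the self-contained grid-homology argument advertised in the introduction, the difficulty shifts to building an explicit grid diagram for $K$ from the word $x^{\alpha_1}y^{\beta_1}\cdots x^{\alpha_t}y^{\beta_t}$ and reading off $\tau(K)$, together with the verification that $\tau(K)=g_4(K)$ realises the unknotting number for this class.
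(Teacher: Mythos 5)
Your proposal is correct in substance, and the arithmetic checks out: with $n=a+b$ and your inversion count $k=t(a+b+1)-\sum\mu_i-\sum\nu_i$ (which agrees exactly with the crossing count $c=(a+b)t-\sum\mu_k-\sum\nu_k+t$ proved in the paper's Lemma 6.1 for the grid diagram), the substitution into $(k-n+1)/2$ does yield \eqref{e1}. But your route is genuinely different from the paper's, and in fact it is precisely the route the paper announces it is \emph{not} taking: you outsource both bounds to the quoted positive-braid result $u(K)=(k-n+1)/2$ (Livingston's $\tau$ computation for the lower bound, Boileau--Weber for the upper bound) together with the Birman--Williams description of the Lorenz braid as a positive permutation braid on $a+b$ strands in which only left--right pairs cross and they cross at most once; the only new content in your argument is the combinatorial evaluation of the inversion count in terms of $\mu_i,\nu_i$. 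The paper instead re-derives both bounds from scratch on an explicit grid diagram: the lower bound comes from computing the Alexander grading $A(\mathbf{x}^-)$ of the distinguished grid state and invoking $A(\mathbf{x}^-)\le-\tau(K)\,$, $|\tau(K)|\le u(K)$ (Sections 4--5), and the upper bound comes from an explicit unknotting procedure --- tracing the diagram over-then-under and carefully counting the ``wrong'' crossings via the quantities $N_A$, $N_B$ and the identity $N_B-N_A=-(t-1)$ (Section 7). Your approach is shorter and more elementary given the cited results, and the agreement of your inversion count with Lemma 6.1 is a useful consistency check; what the paper's approach buys is independence from the general positive-braid formula and an explicit minimal unknotting sequence read off from the diagram. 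The genuine obligations you correctly flag --- that same-side strands never cross and opposite-side strands cross exactly once when their order is reversed, and the reverse indexing of the $q_j$ --- are standard from \cite{BW1} but would need to be stated and cited for your write-up to be complete.
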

The plan of the proof: we turn the diagram of a Lorenz knot into a grid diagram.
We find a lower bound on unknotting number using the $ \tau$-invariant of the knot.
We show that this number of crossing changes is enough to unknot the knot.

\section{Grid diagram and $ \tau $-invariant of a knot, background}
There is a
theory of grid homology for knots and links, introduced by Peter S. Ozsv\'ath and
  Zoltan Szab\'o. In particular they define the $\tau$-invariant of a knot  from its grid homology and prove that
it gives a lower bound for the unknotting number:
\begin{equation}\label{e2}
 |\tau (K)|\leq u(K).
 \end{equation}
Since we will not use  the definition of $\tau $-invariant we will not present it here .  Rather we will use a lower
bound for
$\tau $-invariant obtained from a grid diagram of a knot.

Grid diagram is $ n \times n $ grid on the plane with $ n $ small squares marked by $ X $ and another $ n $ small
squares marked by $ O $, so that there is exactly one $ X $ and one $ O $ in each column and in each row. In each column
we connect $ X $ to $ O $ by an oriented segment, and in each row we  connect $ O $ to $ X $ by an oriented segment.
We arrange that vertical segments go over horizontal ones. Clearly, such a diagram defines an oriented link, and every
oriented link in $ S^3 $ can be presented by a grid diagram. (See Figure 4a.)

\begin{figure*} 
\includegraphics[height=5cm,viewport= 43 475 607 750,clip]{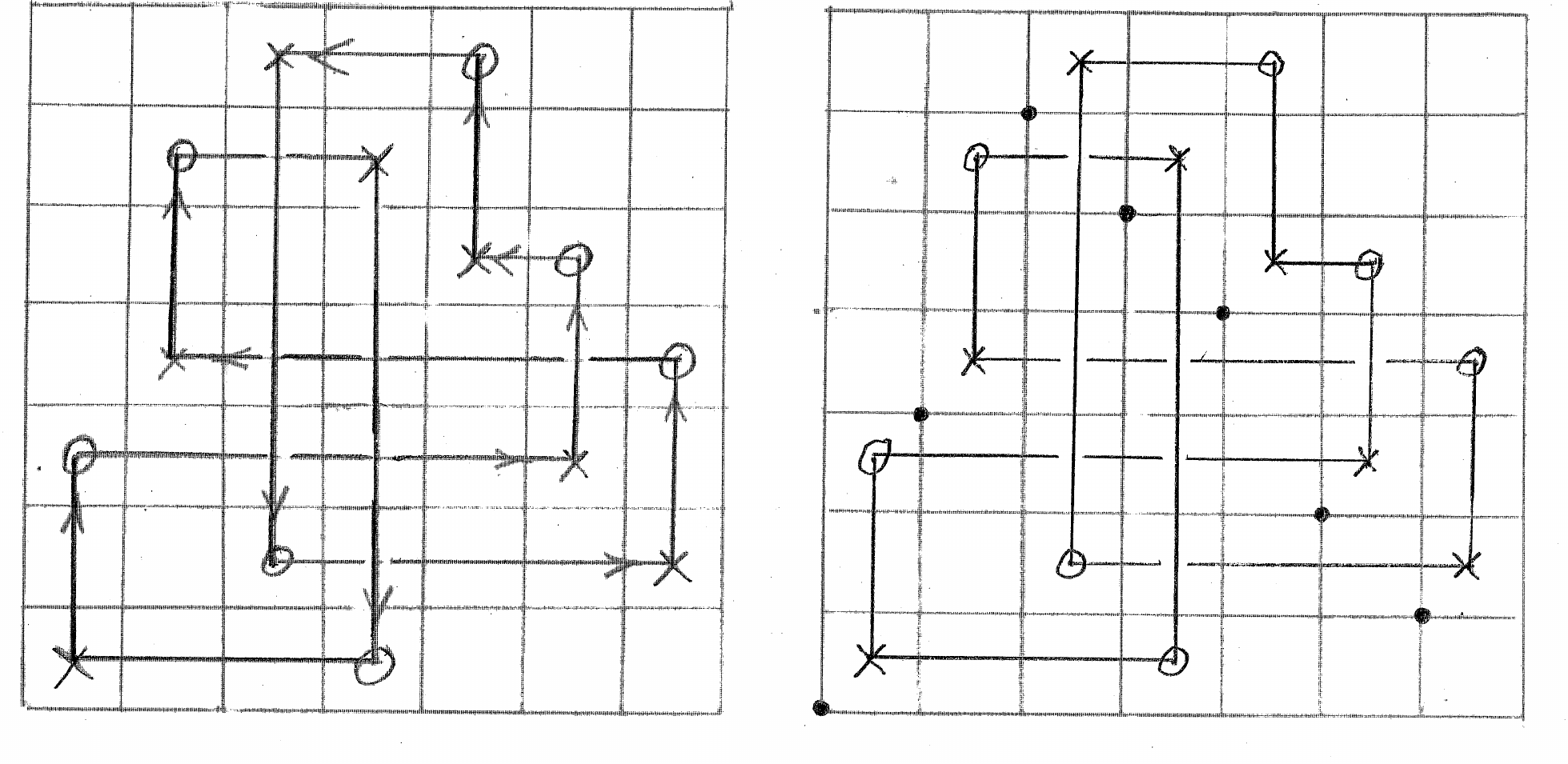}
\vspace{2mm}

Figures 4 a,b . 
\label{lfig4}
\end{figure*}

We transfer our planar grid diagrams to the torus $ T $ obtained by identifying the top-most segment with the bottom-most
 one and the left-most segment with the right-most one.
In the torus horizontal and vertical arcs of the grid become horizontal and vertical circles.

A grid state is an $ n-$tuple
of intersection points $\bold{x}=\{x_1,\ldots, x_n \}$ of vertical and horizontal circles satisfying the requirement
that each vertical circle
contains exactly one of the elements of $\bold{x} $ and each horizontal circle contains exactly one element of $\bold{x} $.
Let $ S(G )$  denote the set of grid states of the grid diagram $ G $.
There is a bi-grading on $ S(G)$: Maslov and Alexander gradings.
\begin{definition} (\cite{OSS}, Definition 4.3.1) Let $ P, Q $ be two collections of finitely many points in the plane $\R ^2$. Let $I(P,Q) $
be the number of pairs $ ( p_1,p_2) \in P $ and $ (q_1,q_2) \in Q $ with  $ p_1<q_1 ,\; p_2<q_2 $.
We symmetrize this, defining $$ J(P,Q)=\frac{I(P,Q)+I(Q,P)}{2}. $$ Then we extend $ J$ bi-linearly over formal sums of subsets
of the plane.
\end{definition}
We will need only the Alexander grading. It can be calculated by the formula:
\begin{equation}\label{e2.2}
 A(\bold{x})= -\sum _{x\in \bold{x}} w_K(x) + \tfrac{1}{2} (J(O,O)-J(X,X))-\tfrac{(n-1)}{2},
 \end{equation}
where $w_K(x) $ is the winding number of the knot around the point $ x $ and $ n $ is the grid number. We will be interested
in a particular grid state $ \bold{x^-} $ which occupies the lower left corner of each square marked with $ X $. (See Figure 4b.) 
For this state it  is proven (\cite{OSS} Proposition 6.3.15) that
\begin{equation}\label{e2.4}
 A(\bold{x}^-) \leq -\tau (K).
 \end{equation}

\section{From Lorenz diagram to grid diagram}
It is easy to turn the diagram of a knot on the Lorenz template into a grid diagram, see Figure 5.

\begin{figure*}
\includegraphics[height=5cm,viewport=65 390 570 745,clip]{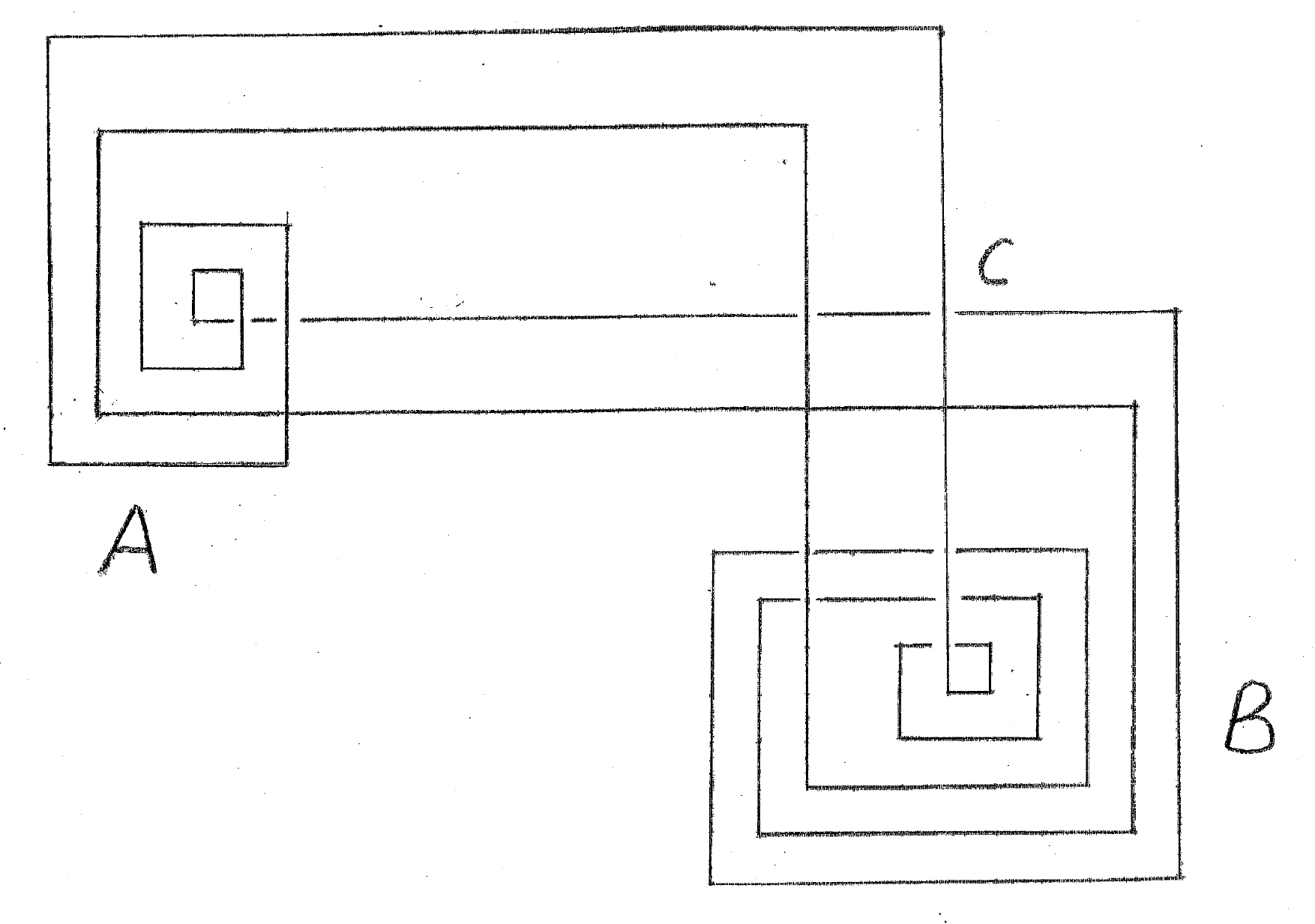}
\vspace{2mm}

Figure 5.
\label{lfig5}
\end{figure*}

The strands going from left to 
right become long vertical strands, and the strands going from  right 
to  left become long horizontal strands. We divide our
diagram  onto upper left part $ A $, lower right part $ B $ and the 
central square $ C $, where
 the vertical and horizontal strands intersect. Note that all 
intersections on the diagram
 happen on long vertical and long horizontal
strands since short vertical and horizontal segments don't intersect 
each other (except for the end points).
Here we used the following convention:
on the grid diagram all vertical segments going from $ X $ to $ O $ fit 
into two groups: long vertical strands going from $ A $ to $ B $ and 
short vertical segments belonging either to $ A $ or to $ B $.
Similarly, all horizontal segments going from $ O $ to $ X $ are either 
long horizontal strands going from $ B $ to $ A $, or short horizontal 
segments belonging to either $ B $ or $ A $.

Note that the grid number of the constructed diagram is $ n=2a+2b-t. $ 

\section{Lower bound for unknotting number}
 Let $ \bold{x}^- $ be the grid state occupying lower left corners
 of squares marked with $ X $, Figure 6.
\begin{figure*} 
\includegraphics[height=6cm,viewport=60 390 535 745,clip]{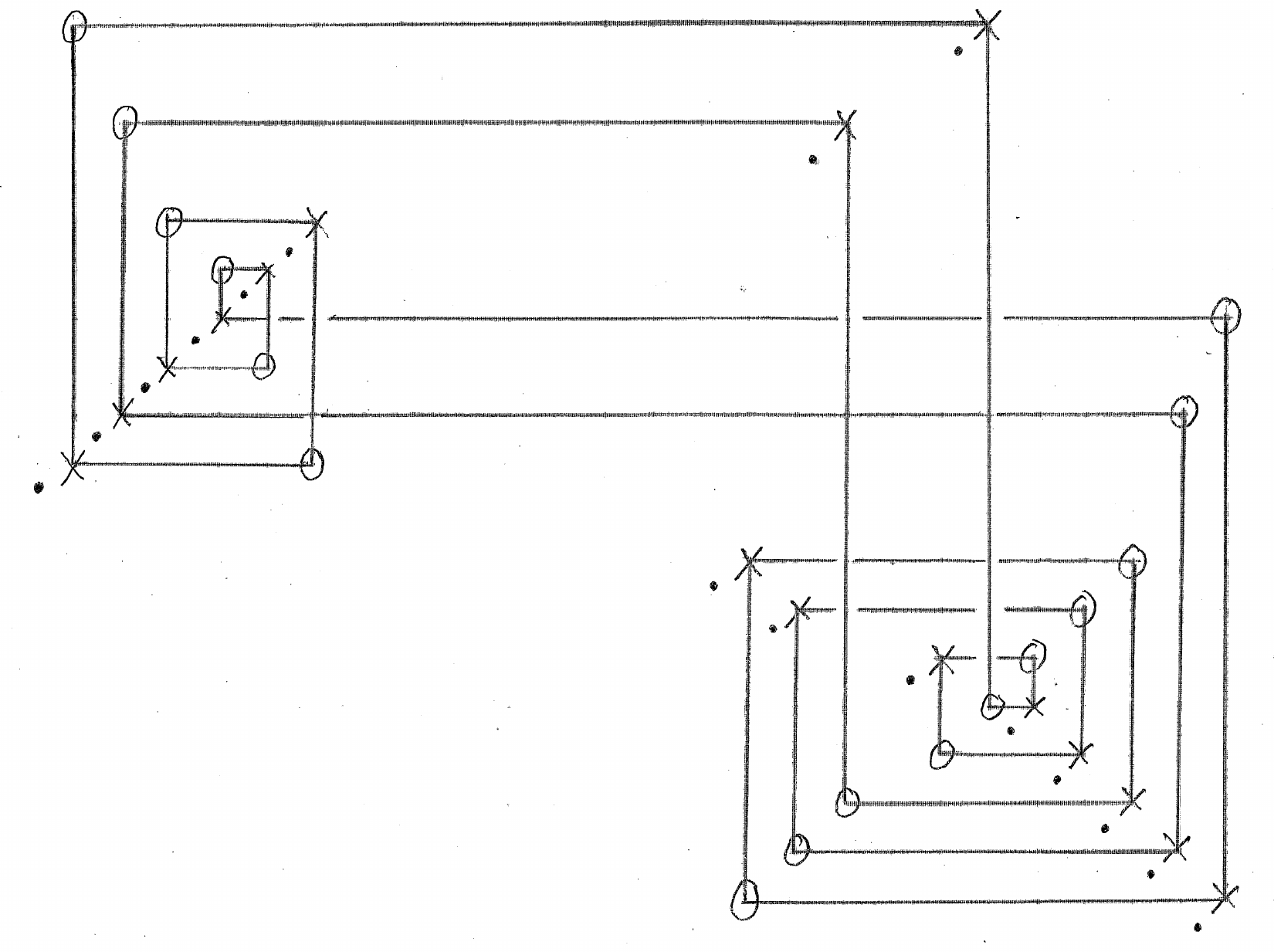}
\vspace{2mm}

Figure 6. 
\label{lfig6}
\end{figure*}

We calculate  its Alexander grading using the formula
\begin{equation}\label{e4.1}
 A(\bold{x}^-)= -\sum _{x\in \bold{x}} w_K(x) + \tfrac{1}{2} (J(O,O)-J(X,X))-\tfrac{(n-1)}{2},
\end{equation}

 \begin{thm}
 \begin{equation}\label{e4.2}
 A(\bold{x}^-)= \frac{1}{2}((a+b)(t-1)-\sum _1^t \mu _i -\sum _1^t \nu _i +(t+1)).
 \end{equation}
\end{thm}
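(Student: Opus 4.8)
The plan is to evaluate the three summands on the right-hand side of \eqref{e4.1} separately, using an explicit coordinate model of the grid diagram of Figure~6, and then to add them and simplify to \eqref{e4.2}. First I would fix coordinates on the $n\times n$ torus, $n=2a+2b-t$, and record exactly where the $X$- and $O$-marks sit. The $t$ long vertical strands occupy $t$ columns running from region $A$ down through the central square $C$ into $B$; the $t$ long horizontal strands occupy $t$ rows running from $B$ through $C$ into $A$; the remaining columns and rows carry only short segments lying in $A$ or $B$. Using the order-preserving coding $z\mapsto k(z)$ of Section~2 and the return map $f$, I would match the row and column of each mark with one of the boundary points $p_1<\dots<p_a<q_b<\dots<q_1$, so that the indices $\mu_i$ and $\nu_i$ literally record the positions at which the long horizontal strands attach to $p$-points and the long vertical strands attach to $q$-points.

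Next I would compute $\tfrac12\bigl(J(O,O)-J(X,X)\bigr)$. By the symmetry built into the definition of $J$ one has $J(O,O)=I(O,O)$ and $J(X,X)=I(X,X)$, so each counts the unordered pairs of like marks in ``southwest--northeast'' position. I would evaluate $I(O,O)$ and $I(X,X)$ region by region ($A$, $B$, $C$, and the cross terms between them), arranging the count so that the contributions of the short segments to the two quantities cancel in the difference, leaving a sum determined by the placement of the long strands.

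The principal obstacle is the winding-number sum $\sum_{x\in\mathbf{x}^-}w_K(x)$ over the lower-left corners of the $X$-cells. For a positive braid the number $w_K(x)$ at an interior grid vertex equals the number of oriented strands of the diagram encircling $x$, so $w_K$ is nonnegative, constant on the rectangular blocks cut out by the long strands, and grows toward $C$. I would group the $n$ corners according to whether they lie in $A$, $B$, or $C$, and for each corner count the enclosing long strands; because a corner attached to $p_{\mu_i}$ or to $q_{\nu_i}$ has an enclosing count governed precisely by its position index, the sum produces terms linear in the $\mu_i$ and $\nu_i$ together with quadratic terms in $a,b,t$. Getting the bookkeeping of ``which corner is enclosed by how many long strands'' exactly right near the boundary of $C$ — with no off-by-one error and no double counting of the $t$ places where a long vertical and a long horizontal strand meet — is the delicate step; I expect the winding-sum and the $J$-difference to each carry quadratic and linear pieces that combine only after simplification into $\tfrac{(a+b)(t+1)-\sum_1^t\mu_i-\sum_1^t\nu_i}{2}$.

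Finally I would assemble the three pieces together with $-\tfrac{n-1}{2}=-(a+b)+\tfrac{t+1}{2}$. Here the constant $\tfrac{t+1}{2}$ in \eqref{e4.2} should appear exactly as the contribution of $-\tfrac{n-1}{2}$, while the $-(a+b)$ is absorbed into the quadratic part to turn $\tfrac{(a+b)(t+1)}{2}$ into $\tfrac{(a+b)(t-1)}{2}+(a+b)$; this gives a convenient internal consistency check on the computation. Collecting the quadratic terms into $(a+b)(t-1)$, the linear terms into $-\sum\mu_i-\sum\nu_i$, and the constants into $t+1$ then yields \eqref{e4.2}. As a final verification against sign and constant errors I would test the formula on the knot $K=x^3y^3xy^2$ of Figure~3, where $a=4$, $b=5$, $t=2$, $\mu=(1,3)$, $\nu=(1,3)$, so that the right-hand side evaluates to $\tfrac12\bigl(9\cdot1-4-4+3\bigr)=2$.
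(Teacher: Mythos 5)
Your plan follows the same route as the paper's proof: evaluate the three summands of \eqref{e4.1} separately on the explicit grid diagram of Figure~6 and then assemble. Your final bookkeeping is also consistent with what the paper actually obtains: the paper's computations give $-\sum_{x\in\mathbf{x}^-}w_K(x)=a^2-b^2+b+bt-\sum\nu_k$, $J(O,O)=at-\sum\mu_k-t^2+\tfrac{t(t+1)}{2}+b(2b-1)$ and $J(X,X)=bt-\sum\nu_k-t^2+\tfrac{t(t+1)}{2}+2a^2-a$, and the sum of the first two terms of \eqref{e4.1} does collapse to $\tfrac{1}{2}\bigl((a+b)(t+1)-\sum\mu_i-\sum\nu_i\bigr)$ exactly as you predict; adding $-\tfrac{n-1}{2}=-(a+b)+\tfrac{t+1}{2}$ then gives \eqref{e4.2}, and your numerical check on $x^3y^3xy^2$ is right.

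The gap is that the two counts carrying all of the content are announced rather than carried out. The value $\tfrac{1}{2}\bigl((a+b)(t+1)-\sum\mu_i-\sum\nu_i\bigr)$ is introduced with \lq\lq I expect\rq\rq\ and is never derived, and the step you yourself flag as delicate --- the enclosing counts near $C$ --- is precisely where the $\mu_i$ and $\nu_i$ enter and where the proof lives. Concretely, you would need: (i) the split of the $2b-t$ points of $\mathbf{x}^-$ in part $B$ into the $b$ rightmost ones (contributing $-\tfrac{b(b-1)}{2}$) and the $b-t$ points left of the first long vertical strand, for which the $k$-th long strand at position $\nu_k$ adds $1$ to exactly $(\nu_k-1)-(k-1)$ of them; and (ii) the count of short horizontal (resp.\ vertical) segments below (resp.\ right of) the $k$-th long strand, namely $a-\mu_k-(t-k)$ and $b-\nu_k-(t-k)$, which is where $\sum\mu_k$ enters $J(O,O)$ and $\sum\nu_k$ enters $J(X,X)$. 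Note in particular that in the paper's bookkeeping the $\mu_i$ do \emph{not} appear in the winding-number sum at all (only the $\nu_i$ do), contrary to your suggestion that corners attached to $p_{\mu_i}$ contribute position-indexed winding counts; only the total of the two pieces is as you state, so the distribution between them must still be pinned down by an actual count. Until those region-by-region computations are written out, the argument is a correct outline with the theorem's content left unverified.
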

\begin{proof}
First we calculate $ A'(\bold{x}^-)=-\sum _{x \in \bold{x}^-} w_K(x)=-\sum _A w_K(x)- \sum_B w_K(x)$, where $\sum _A $ is the sum over upper $ 2a $ elements of $ \bold{x}^- $ and $\sum _B $ is the sum over lower $2b -t $ elements of $ \bold{x}^- $, see Figure 6.
Since for  $ x \in A $ , $ -w_K(x) $ is equal to the total number of vertical segments to the left of it,
$$ -\sum _A w_K(x) = 0+1+2+\ldots +(a-1)+a+(a-1)+ \ldots +1=a^2.$$
To calculate $-\sum _B w_K(x) $ note that for $b $ rightmost points $ x\in \bold{x}^- $, $ -w_k(x) $ is equal to negative
 number of vertical segments to the right of $ x $, so these $ x $ contribute $-(0+1+\ldots +b-1) =-\frac{b(b-1)}{2} $.
 Only $b-t $ points remain to the left of the first vertical strand. For them it is convenient to calculate $-w_K $ as negative number of vertical segments and vertical strands to the left of them.
 They contribute $0+1+\ldots+(b-t-1) $ plus vertical strand number $k $ adds $1$ to  $\nu _k -1-(k-1) $ points ( There are $\nu _k -1 $ vertical segments to the right of strand number $\nu _k $ till the first strand and $k-1 $ of them are long strands. The number of points $x\in \bold{x}^- $ in the fragment of the diagram that we consider now is the number of short segments).

$$
  -\sum _B w_K (x)=-\frac{b(b-1)}{2}-\frac{(b-t-1)(b-t)}{2} -\sum_1^t ((\nu -1)-(k-1))
  $$
  $$
 =-b^2+b+bt-\sum_1^t \nu_k ,
 $$

 and $$A'(\bold{x}^-)=-\sum_{x\in\bold{x}^-} w_K(x)=a^2-b^2+b+bt-\sum_1^t \nu _k.$$

 Now $ J(O,O) $ and $J(X,X) $ are calculated straightforwardly. There are $ 2b\; $ $O'$ s in part $B \cup C $
 that contribute $ b(2b-1) $. In part $ A \cup C $ each $ O $ on the $k-$th long horizontal strand that has position $ \mu _k $ contributes  the amount equal to the amount of short horizontal segments below this strand. There are $ a- \mu _k $ horizontal segments below $\mu _k $ and $ t-k $ of them are long horizontal strands. So there are $a-\mu _k-(t-k) $ short horizontal segments ($ O'$s ) below $ k-$th strand. We get
 $$
 J(O,O)=\sum _{k=1}^t(a-t-(\mu _k-k))+b(2b-1)=at-\sum _1^t \mu _k-t^2+\tfrac{t(t+1)}{2} +b(2b-1).
 $$
 Similar argument gives
 $$
 J(X,X)=\sum _{k=1}^t(b-t-(\nu _k -k)) +a(2a-1)
 $$
 $$
 =bt-\sum _1^k \nu _k -t^2+ \tfrac{t(t+1)}{2}+2a^2-a.
 $$
 Plugging all these results into formula \eqref{e4.1}, we get the desired formula \eqref{e4.2}.
 \end{proof}

\section{Intersection points of a grid diagram}
\begin{lem}
The number of crossings on the grid diagram for $ K $ is
\begin{equation}\label{e5.1}
c=(a+b)t -\sum \mu _k -\sum \nu _k +t
\end{equation}
\end{lem}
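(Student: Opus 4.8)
The plan is to count the crossings directly on the grid diagram, organizing the bookkeeping exactly as in the computation of \eqref{e4.2}. Every crossing of the diagram is a transverse intersection of a vertical strand with a horizontal strand, and, by the construction of Section~4, each such intersection involves at least one \emph{long} strand. Equivalently, each crossing corresponds to a crossing of the underlying Lorenz braid, i.e. to an inversion of the first return map $f$ read off in the left-to-right order of the branch points $p_1<\dots<p_a<q_b<\dots<q_1$. Since $f$ is monotone increasing on each of $I_1$ and $I_2$, two strands that both stay in the left region never cross, and likewise for two strands that both stay in the right region; a left-staying strand and a right-staying strand never produce an inversion either. Hence the crossings split into exactly three families: long vertical with long horizontal; long vertical with short horizontal; and short vertical with long horizontal. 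I would count each family and add.

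For the long--long family I would note that each of the $t$ left-to-right long strands crosses each of the $t$ right-to-left long strands exactly once (the image of the former lies in the right region while the image of the latter lies in the left region, forcing an inversion), contributing $t^2$. For the long vertical with short horizontal family, I would index the long vertical strands by $k=1,\dots,t$ through their endpoints $q_{\nu_k}$ and count the short horizontal segments each one crosses: there are $b-\nu_k$ right-region points to the left of $q_{\nu_k}$, of which $t-k$ lie on long strands, leaving $(b-\nu_k)-(t-k)$ short ones, so this family contributes $tb-\sum_{k=1}^t \nu_k-\tfrac{t(t-1)}{2}$. The mirror-image count for the short vertical with long horizontal family, indexing the long horizontal strands by their endpoints $p_{\mu_k}$, gives $ta-\sum_{k=1}^t \mu_k-\tfrac{t(t-1)}{2}$. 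Summing the three contributions, the two $\tfrac{t(t-1)}{2}$ terms cancel against $t^2$ and leave $c=(a+b)t-\sum \mu_k-\sum \nu_k+t$, as claimed.

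The arithmetic of the final sum is routine; the step that needs care is the passage from the positions $\mu_k,\nu_k$ to crossing counts, namely remembering to subtract from $a-\mu_k$ (respectively $b-\nu_k$) the $t-k$ long strands already tallied in the long--long family, so that each crossing is counted once and only once. This is precisely the subtlety met in computing $J(O,O)$ and $J(X,X)$ in the proof of \eqref{e4.2}, and I would lean on that parallel to keep the indexing consistent. As an independent guard against an off-by-one error, the resulting value satisfies $c=2A(\mathbf{x}^-)+(a+b)-1$, which combined with \eqref{e4.2} reproduces the formula there exactly.
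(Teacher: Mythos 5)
Your proposal is correct and follows essentially the same route as the paper: the same three-way decomposition (the $t^2$ long--long crossings in the central square, the long-vertical/short-horizontal crossings counted per strand as $(b-\nu_k)-(t-k)$, and the mirror count with $a$ and $\mu_k$), with identical final arithmetic. The only additions are a dynamical justification of why every crossing involves a long strand (which the paper handles geometrically in Section 4) and a consistency check against \eqref{e4.2}; both are fine.
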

\begin{proof}
Note that all intersections happen on long vertical and long horizontal  strands. Take a vertical strand and calculate the number  of intersections on part $ B $ of it. That is the number of short horizontal segments that go down after intersecting the strand. It is equal to the total number of vertical segments to the left of our strand minus the number of the long vertical strands to the left of it. If the $ k $-th strand occupies position $ \nu _k $, then the number of intersections on it is $$ c_k=(b-\nu _k) -(t-k) $$ Taking the sum over $ k $ we get
$$
c_B=bt-\sum_1^t\nu _k -t^2 + \frac{t(t+1)}{2},
$$
where $c_B $ is the number of intersections in part $B$.
Similarly, for horizontal strands
$$
c_A=at-\sum_1^t\mu _k -t^2 + \frac{t(t+1)}{2},
$$
where $c_A$ is the number of intersections in part $A$.
Then we add $ t^2 $ intersections of long vertical strands with long horizontal strands to get
total number of intersections
$$
c=(a+b)t-\sum_1^t \mu _k -\sum_1^t\nu _k +t
$$
\end{proof} 

\section{Unknotting the knot}
\begin{thm}
Unknotting number of a Lorenz knot $K ,\; u(K) $, satisfies the inequality
 \begin{equation}\label{eq6.1}
 u(K) \leq  \frac{1}{2}((a+b)(t-1)-\sum _1^t \mu _i -\sum _1^t \nu _i +(t+1)).
 \end{equation}
\end{thm}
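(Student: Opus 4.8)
The plan is to identify the right-hand side of \eqref{eq6.1} with the classical positive-braid quantity and then realize that number of crossing changes concretely. First I would note that $K$ is the closure of a positive braid whose strands are the $a+b$ points $p_1,\dots,p_a,q_b,\dots,q_1$ in which $K$ meets the branch set $I$ and whose permutation is the first return map $f$; thus this braid has $a+b$ strands and, by \eqref{e5.1}, exactly $c=(a+b)t-\sum_1^t\mu_k-\sum_1^t\nu_k+t$ positive crossings. A direct computation then gives
\begin{equation*}
\tfrac12\left((a+b)(t-1)-\sum_1^t\mu_i-\sum_1^t\nu_i+(t+1)\right)=\tfrac12\left(c-(a+b)+1\right),
\end{equation*}
so it suffices to prove $u(K)\le\tfrac12\left(c-(a+b)+1\right)$.

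Next I would produce an unknotting with $\tfrac12\left(c-(a+b)+1\right)$ crossing changes by induction on $c$, keeping the braid positive and its closure connected at every stage. Since the closure is a knot, the braid permutation is an $(a+b)$-cycle, so each generator $\sigma_i$ occurs at least once (otherwise the strands would split and the closure would be disconnected). If $c=(a+b)-1$, every generator occurs exactly once, Seifert's algorithm produces a connected surface of Euler characteristic $(a+b)-((a+b)-1)=1$, i.e.\ a disk, and $K$ is already unknotted, in agreement with $\tfrac12\left(c-(a+b)+1\right)=0$. If $c\ge a+b$, some generator $\sigma_j$ occurs at least twice; I would slide two of its occurrences together, switch one to $\sigma_j^{-1}$, and cancel the resulting pair $\sigma_j\sigma_j^{-1}$. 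This uses one crossing change, decreases $c$ by two, and preserves both positivity and the $(a+b)$-cycle permutation, so the inductive hypothesis applied to the resulting knot $K'$ yields $u(K)\le 1+u(K')\le 1+\tfrac12\left((c-2)-(a+b)+1\right)=\tfrac12\left(c-(a+b)+1\right)$.

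The main obstacle is the combinatorial step of guaranteeing that two occurrences of some $\sigma_j$ can always be made adjacent by positive braid moves: intervening letters $\sigma_{j\pm1}$ can obstruct naive commutation, and one must use the braid relation $\sigma_j\sigma_{j+1}\sigma_j=\sigma_{j+1}\sigma_j\sigma_{j+1}$ to reshuffle. I expect this to be the crux, to be settled either from the rigid form of the Lorenz braid encoded by the positions $\mu_i$ and $\nu_i$, or by invoking the positive-braid reduction of \cite{BoW}, \S4.

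Finally, this combines with Section 5 to pin down the exact value: \eqref{e4.2} computes $A(\mathbf{x}^-)$ to be precisely the right-hand side of \eqref{eq6.1}, while \eqref{e2.4} and \eqref{e2} give $A(\mathbf{x}^-)\le -\tau(K)\le|\tau(K)|\le u(K)$. Hence the upper bound \eqref{eq6.1} and this lower bound coincide, yielding formula \eqref{e1} of Theorem 2.3.
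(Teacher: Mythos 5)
Your reduction of the right-hand side to the classical positive-braid quantity is arithmetically correct: with $c=(a+b)t-\sum_1^t\mu_k-\sum_1^t\nu_k+t$ from Lemma 6.1 and $n=a+b$ strands of the Lorenz braid, one indeed has $\tfrac12\bigl(c-(a+b)+1\bigr)=\tfrac12\bigl((a+b)(t-1)-\sum\mu_i-\sum\nu_i+(t+1)\bigr)$, and your final paragraph assembling the two-sided bound via \eqref{e4.2}, \eqref{e2.4} and \eqref{e2} matches what the paper does after Theorem 7.1. But this is a genuinely different route from the paper's, and the step you yourself flag as the crux is a real gap, not a technicality. It is simply false that two occurrences of a generator $\sigma_j$ in a positive word can in general be made adjacent by commutations, and the braid relation $\sigma_j\sigma_{j+1}\sigma_j=\sigma_{j+1}\sigma_j\sigma_{j+1}$ consumes the very pair you are trying to assemble; the known proofs of $u\le\tfrac12(k-n+1)$ for positive braids require a more delicate reduction. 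Your fallback of ``invoking the positive-braid reduction of [BoW], \S 4'' would close the gap, but it makes the argument circular relative to the paper's stated purpose: the introduction already observes that Livingston's computation of $\tau$ plus [BoW] \S 4 yields the formula $u=(k-n+1)/2$, and the point of the paper is to establish the upper bound ``using a different method.'' You would also need to say a word about why the crossing count of Lemma 6.1, computed on the grid diagram, coincides with the crossing count of the $(a+b)$-strand Lorenz braid presentation.

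The paper's own proof of Theorem 7.1 avoids all of this. It uses the classical fact that a descending diagram (trace the knot, going over on the first visit to each crossing) is a diagram of the unknot, and then directly counts the ``wrong'' crossings of the grid diagram --- those that must be switched to make it descending --- by cutting the knot into $t$ strings at the long horizontal strands, classifying self-intersections and pairwise intersections in the regions $A\cup C$ and $B\cup C$, and controlling the correction terms $N_A$, $N_B$ via the signs $\eps_{ij}$, $\delta_{ij}$, with the key combinatorial identity $N_B-N_A=-(t-1)$. That count lands exactly on the right-hand side of \eqref{eq6.1}, so the unknotting procedure is explicit in the given diagram and self-contained. In short: your approach buys brevity at the price of outsourcing the hard step to [BoW]; the paper's approach is longer but constructive and independent of that reference.
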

\begin{proof}
It is a classical observation that if we trace a diagram of  a knot going first time over and second time under at each intersection, then it is a diagram of the unknot. Call such a diagram $ U $-diagram. We start to trace our diagram from $ X $ of the first long horizontal strand and count the number of crossing changes needed to turn it into $ U $-diagram. We call the crossing that we don't need to change
"right crossings", and these we have to change  "wrong crossings". Let us cut long horizontal strands at their  left ends marked $ X $. We obtain $ t $ strings. Numerate them
from $1 $ to $ t \;$  in the order we trace the diagram  . For example, the first long horizontal strand belongs to the $ t $-th string.
In the word for this knot the $i-$th string corresponds to the part $x^{\alpha _i}y^{\beta _i}.$
Each string goes around in $ A $-part without self intersections, then goes down  the vertical strand, goes around in $ B $-part, probably intersecting itself on the long vertical strand then returns to $ A $-part via the long horizontal strand  where it may intersect itself. It shows that self intersections of each string are right crossings, because we trace a vertical segment of a string before tracing the corresponding horizontal segment. The number of self intersections on the long vertical strand of the $ i $-th string is $ \beta _i $, so the total number  of self intersections on $ B \cup C $ is
$\sum_1^t \beta _i =b $.

  Next we analyze intersections between $i $-th and $j $-th strings inside $B \cup C $.
  Note that a crossing $ (i,j)$ with vertical segment belonging to the $i $-th string and the horizontal segment belonging  to the $ j$-th string is right if $ i< j $ and is wrong if $ i>j $.

  There are two possible positions of these strings in $ C $, corresponding to the two possible scenario  drawn on Figure 7a,b.
\begin{figure*} 
\includegraphics[height=6cm,viewport=0 0 540 400,clip]{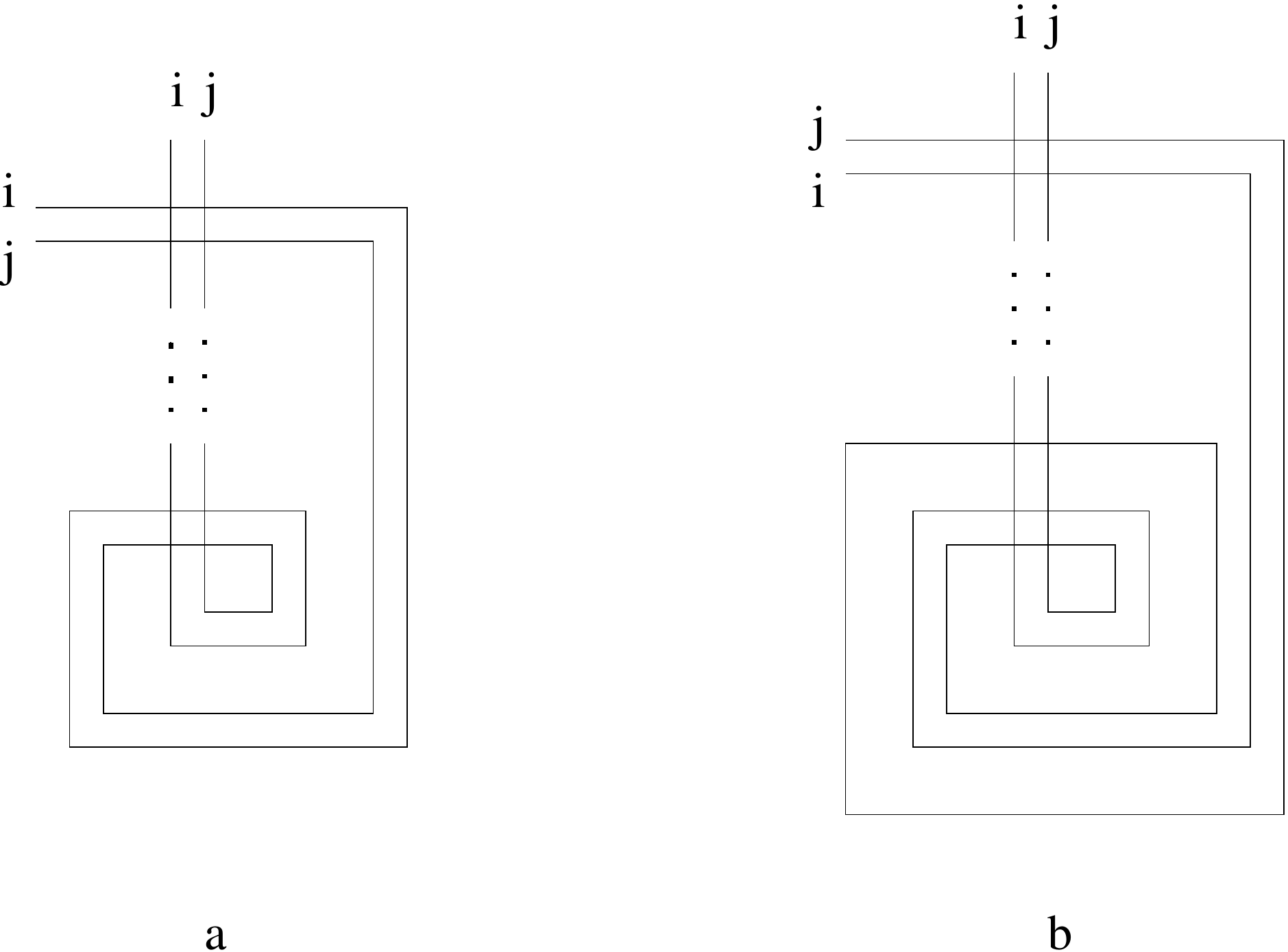}
\vspace{2mm}

Figure 7  
\label{fig7b}
\end{figure*}



  We introduce $\eps _{ij}=1, -1 \; \text{or } 0 $ according to the rule:
 \begin{equation*}
 \eps _{ij}=\begin{cases}
  $\; 1 $,\text{ if in the upper left corner of the rectangle formed by long vertical}\\
   \;\;\;\; \text{    and long horizontal strands of $ i $-th and $j $-th strings, the vertical} \\
   \;\;\;\; \text{    string is greater than the horizontal;}\\
  -1,  \text{ if the vertical string on that corner is less than the horizontal;}\\
  $\; 0$, \text{ if they are equal.}
   \end{cases}
  \end{equation*}

  Let $c_{ij} $ be the number of intersections between the strings $i, j$ in part $B\cup C $ (self intersections are not included). Then the number of wrong intersections  among them is $\frac{1}{2} (c_{ij}+ \eps _{ij}) $. By $\sum _{(i,j)}$ we mean the sum over unordered pairs $(i,j)\; 1 \leq i,j \leq t,\;i \neq j.$  Denote $ N_B =\sum_{(i,j)} \eps  _{ij}$.
  Summing up over all pairs of strings we get  the number of wrong crossings  in part $B\cup C$ to be $ u_{B \cup C} =\frac{1}{2} (c_{B \cup C } -b +N_B)=\frac{1}{2}(b(t-1) -\sum_1^t \nu_k + \frac{t(t+1)}{2}  +N_B )$

Calculation of the number of wrong intersections in part $ A \cup C $, $u_{A \cup C}  $, is similar. We cut  each long horizontal strand at its right end $ O $, so that the first long horizontal strand that belonged to the $t $-th string now belongs to the first string , and $ k $-th long horizontal strand if belonged to $ (i-1) $-th string now belongs to $i $-th string. Denote the central square with the new numeration by $C_A$ , and with the old numeration by $C_B$. Now in each string long horizontal strand goes before long vertical strand. We can calculate the number of intersections $(i,j) $ where horizontal segment $ j $ is greater then vertical segment $i $, using the same method we used in $ B $-part
(for this calculation   horizontal strands play the role of vertical strands in $B $).
Let




\begin{equation*}
\delta _{ij}=
\begin{cases}
\;  1, \text{ if in the lower right corner  of the rectangle formed}\\
\;\;\;\;\text{  by  $i $-th and $j $-th strings in $C_A$, the horizontal}\\
\;\;\;\;\text{ string is greater than the  vertical string,}\\
-1,  \text { if the horizontal string  is smaller then the vertical one,}\\
\; 0, \text{ if they are equal.}
\end{cases}
\end{equation*}

\noindent
Let $N_A=\sum_{(i,j)}\delta _{ij}$.
The number of intersections $(i,j)$ in $A \cup C $ with $j>i $ is $$
 u_{j>i}=\frac{1}{2}[c_{A \cup C}-a+N_A].
 $$
 These are right intersections. Also all $a $ intersections  for $j=1$ are right. All the rest intersections: with $i>j $ and $i=j \neq 1 $ are wrong, and there are $u_{A \cup C}$ of them:
 $$u_{A \cup C}=c_{A \cup C }-\frac{1}{2}(c_{A \cup C}-a+N_A)-a=\frac{1}{2}(c_{A \cup C}-a-N_A)$$
 $$=\frac{1}{2}(a(t-1) -\sum_1^t \mu_k + \tfrac{t(t+1)}{2}  -N_A ) $$
 The  number of wrong intersections in $C $ is clearly $\frac{t(t-1)}{2}$ so the total number of wrong intersections on the diagram is $ U= u_{A\cup C}+u_{B\cup C}- \frac{t(t-1)}{2}$, so
 $$
 u(k)\leq U=\frac{1}{2}((a+b)(t-1)-\sum_1^t\mu _k -\sum_1^t\nu _k +2t+N_B-N_A).
 $$
To prove Theorem(6.1) it remains to prove
 that
   $ N_B-N_A=-(t-1) $.

  The proof of this statement uses only some combinatorics on the square and is not related to a knot.

 Suppose we have two squares $A $-square and $ B $-square with $ t $
 vertical strands, numerated from $1$ to $t $, and $ t $ horizontal strands numerated from $1$ to $t $, so that in $B $-square
 the uppermost horizontal strand has number $ t $. In $A$-square  the numeration of vertical strands coincides with that of $ B $-square , the uppermost horizontal strand changes number from $t $ to $1$ , and all other horizontal strands if had number $ i $, get number  $ i+1 $. (A- and B-squares inherit numeration of the strands from the cuts of the long horizontal strands  at O's and at X's respectively).
 \begin{lem}
 When we interchange vertical strands, $N_B-N_A $ does not change.
 \end{lem}
 \begin{proof}
 In the picture we see the fragments of $ B $ and $ A $ squares  with rectangles formed by $i $-th and 
 $ j $-th strands.
 These two configurations describe all possible mutual positions of $i $-th and $j $-th strands up to interchanging $i $-th and $j $-th vertical strands, which is an involution. So it is enough to prove the statement for these two configurations.
 When we interchange vertical strands on Figure 8a,
$\eps_{ij} $ and $\delta_{ij}$ change from $0$  to $1$ if $i>j$
and from $0$ to $-1$ if $i<j$.  So $\eps _{ij}-\delta_{ij}$ does not change. 
If the position of $i $-th and $j $-th strands is as on Figure 8b then
  $\eps _{ij}$ changes from $0$ to $-1$, $\delta _{ij} $ changes from $1$ to $0$ if
$i>j$, and $\eps _{ij}$ changes from $0$ to $1$, $\delta _{ij}$ changes from -1 to 0, 
if $i<j$ and 
  $\eps _{ij}-\delta _{ij} $ again does not change.
So $N_B -N_A $ does not change when we change the order of vertical strands.
 \end{proof}

\begin{figure*} 
\includegraphics[height=3cm,viewport=0 0 710 188,clip]{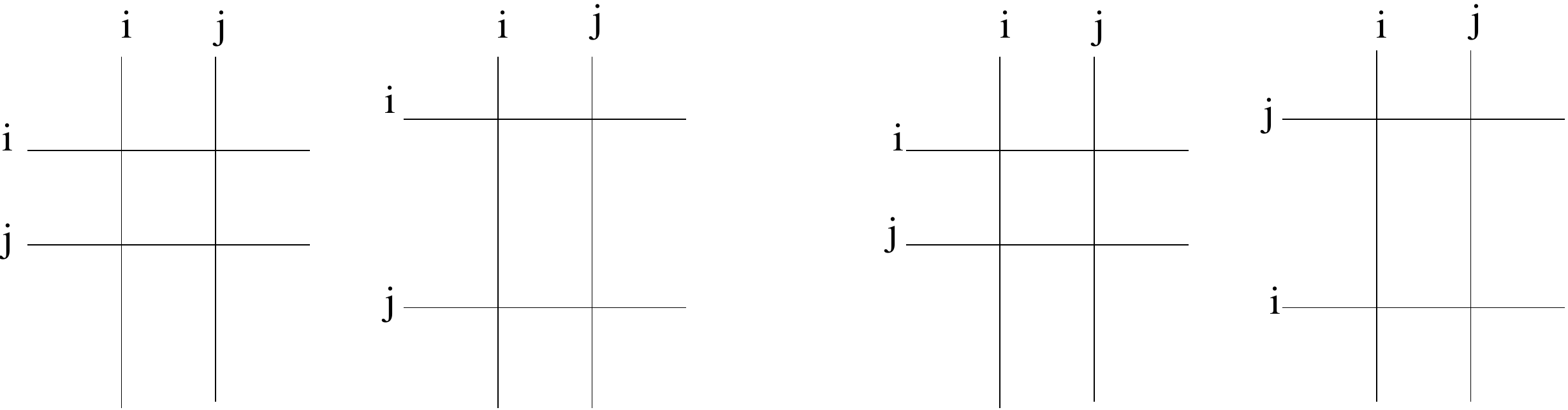}
\hspace{3mm}\text{B-square} \hspace{12mm} \text{A-square} \hspace{20mm} 
\text{B-square} \hspace{12mm} \text{A-square}\\
\vspace{7mm}
\hspace{5mm} \text{Figure 8a.} \hspace{45mm} \text{Figure 8b.}
\vspace{2mm}

\label{afig8}
\end{figure*}

 Now we prove that
$ N_B-N_A=-(t-1) $.
 Change the order of vertical strands so that they go from $1 $ to $t $. Then there is one-to-one correspondence  $(i,j) \leftrightarrow $$(i+1,j+1) $ between the intersection points of $B $-square $(i,j), \;i,j \neq t $ and intersection points of $A $-square with $i,j \neq 1$. We have $\eps _{ij} -\delta _{i+1, j+1}=0$. So $$
 \sum_{i,j \neq t} \eps _{ij} - \sum _{i,j  \neq 1} \delta _{ij}=0.
 $$ (Summation is over non-ordered pairs $(i,j)$.)
 Now $\eps _{t,j}=-1 $ for $j=1,2, \ldots ,
  t-1$ and $\delta _{1,j}=0 $ for $j=2, \ldots ,t $.
  So $ N_B - N_A =-(t-1).$
  This completes the proof of  Theorem 6.1.
  \end{proof}

We have proven $$ u(K) \leq \frac{1}{2}((a+b)(t-1)-\sum_1^t\mu _i -\sum_1^t \nu _i +(t+1))=A(x^-).$$ Combining this with inequalities $$A(x^-)\leq -\tau (K) \;, \;\;
 |\tau (K)|\leq u(K) \;,$$
we obtain  the unknotting number for Lorenz knots:
$$u(K)=\frac{1}{2}((a+b)(t-1)-\sum_1^t\mu _i -\sum_1^t \nu _i +(t+1)).$$

\end{document}